\documentclass[12pt]{amsart}

\usepackage[utf8]{inputenc}
\usepackage{amsmath,amsthm,amssymb,amsfonts,euscript,graphicx,hyperref,indentfirst}
\usepackage{amssymb}
\usepackage{amsbsy}
\usepackage{graphicx}

\parskip = 1 mm

\setlength{\textwidth}{\paperwidth}
\addtolength{\textwidth}{-2in}
\calclayout

\newtheorem{theorem}{Theorem}[section]
\newtheorem{definition}{Definition}
\newtheorem{corollary}{Corollary}[section]
\newtheorem{lemma}{Lemma}[theorem]
\newtheorem{question}{Question}

\begin{document}
\title[Confined subgroups of Thompson's group $F$] {Confined subgroups of Thompson's group $F$ and its embeddings into wobbling groups}
\author{Maksym Chaudkhari}
\begin{abstract}
	We obtain a characterisation of confined subgroups of Thompson's group $F$. As a result, we deduce that orbital graph of a point under action of $F$ has uniformly subexponential growth if and only if this point is fixed by the commutator subgroup.  This allows us to prove non-embeddability of $F$ into wobbling groups  of graphs with uniformly subexponential growth.  
\end{abstract}
\maketitle
\section{Introduction}
  Discovered in 1965, Richard Thompson's group $F$ became an important object of study for geometric group theory and measured group  theory. This group  has an unusual combination of properties $-$ it is a finitely presented, torsion-free group, which has a free subsemigroup but no free subgroups, and its commutator subgroup is simple. Furthermore, amenability of $F$ remains a major open question. Recall that an action of a group $G$ on a set $X$ is amenable if  $X$ carries a $G$-invariant finitely additive probability measure. A group is called amenable if its action on itself by left multiplication is amenable. The class of amenable groups contains all finite and abelian groups, and a group is called elementary amenable if it could be obtained from finite and abelian groups by taking extensions, direct limits, quotients or subgroups. Thompson's group $F$  is known to be not elementary amenable. We refer the reader to \cite{Jose} and \cite{CFP} for more details about Thompson's group $F$.\\
  One of the approaches to proving  amenability of a group is based on the study of its embeddings into wobbling group of a suitable graph. In particular,  amenability of the  full topological group of a Cantor minimal system was established using certain  embedding into $W(\mathbb{Z})$ and the fact that action of $W(\mathbb{Z})$ on $\mathbb{Z}$ is extensively amenable (notion first defined in \cite{Kate+Vova}),  see  \cite{Kate} or \cite{Kate+Monod}. Furthemore, amenability of interval exchange transformation group, or shortly the IET, is equivalent to  a question about amenability of subgroups of wobbling group $W(\mathbb{Z}^d)$ for $d>3$, we refer the reader to \cite{Kate+} for more details.
 Therefore, it is natural to try to determine whether  $F$ could be embedded into a wobbling group of a "small" graph. In this case small would mean that its wobbling group's action is extensively amenable. This condition is satisfied, for example, for recurrent graphs. Furthermore, at this time there are no known examples of graphs with polynomial or uniformly subexponential growth for which the condition fails (see \cite{Kate+}).
 
 In this article we   prove that $F$ does not embed into a wobbling group of a graph with uniformly subexponential growth. We follow the approach  developed by Nicolas Matte Bon in  \cite{Nico} for full groups of minimal etale grupoids of germs. In particular, we  obtain a characterisation of confined subgroups of Thompson's group $F$ which is equivalent to a characterisation of its Schreier graphs which do not contain arbitrarily large balls isomorphic to a ball in the Cayley graph of $F$. Since standard Cayley graph of $F$ is extremely hard to visualize,  this result gives us some characterisation of Schreier graphs of $F$ which one could hope to construct.  We would like to notice that the study of Schreier graphs of $F$ has already provided  valuable information about this group. For example, a  description of a family of  Schreier graphs of $F$ by Dmytro Savchuk in \cite{Savchuk} is used in study of  Poisson-Furstenberg boundary  of $F$, see \cite{Pasha},\cite{Kaimanovich}, \cite{K+T} and recent preprints \cite{Stankov} and \cite{Lamenable}.
 \subsection*{Acknowledgements}
 The author is grateful to his advisor  Kate Juschenko for introducing him to the topic and her guidance, support and encouragement. The author would also like to thank Nicolas Matte Bon for   valuable remarks  about a preliminary version of this article.
\section{Preliminaries}
\subsection{ Properties of Thompson's group F}
This subsection contains a brief overview of  properties of Thompson's group $F$  which are used in this article. Proofs of majority of these facts could be found in \cite{Jose} or \cite{CFP}.
 \begin{definition}
 	Thomposon's group $F$ is a group formed by all orientation-preserving piecewise linear  homeomorhpisms of segment $[0,1]$ which have   breaking points in dyadic rationals  with slopes equal to powers of $2$. 
 \end{definition} 
In this article we consider right action of $F$ on unit segment,  thus for any two homeomorphisms $f, g \in F$ their composition is defined as $hg(x)= g(h(x))$. It is well-known that for any $n\geq 1$ action  of $F$ on ordered $n-$tuples of dyadic rationals is transitive. The \textbf{support} of   $g \in F$ is  the closure of the set of points  of unit segment on which $g$ acts nontrivially.  Thompson's group $F$ has finite presentation: $$\langle x_0,  x_1 | [x_0x_1^{-1},x_0^{-1}x_1x_0]=[x_0x_1^{-1},x_0^{-2}x_1x_0^2]=id \rangle, $$ where the generators are defined as follows:
$$ x_0(t) = \begin{cases} t/2, & 0 \leq t \leq 1/2 \\ t-1/4, & 1/2 \leq t\leq 3/4\\ 2t-1, & 3/4 \leq t \leq 1 \end{cases}, \mbox{ and }  x_1(t)= \begin{cases} t, & 0\leq t\leq 1/2 \\ t/2+1/4, & 1/2 \leq t \leq 3/4 \\ t-1/8, & 3/4 \leq t \leq 7/8\\ 2t-1, & 7/8 \leq t \leq 1 
\end{cases}.
$$ Its commutator subgroup $F'$ is a simple group which coincides with the subgroup of all elements  with support contained in the unit interval $(0,1$). The quotient  $F/F'$ is isomorphic to $\mathbb{Z}^2$, and a subrgoup  $H<F$ is normal if and only if it contains the commutator subgroup of $F$.\\
 For any segment   $[a,b] \subseteq [0,1]$ (or interval $(a,b)$) denote $F[a,b]$ (respectively $F(a,b)$) the subgroup of all elements of $F$ with support in $[a,b]$ (respectively $(a,b)$). We will need  the following properties of these subgroups:
 \begin{enumerate}
 	\item If $a<b$ are dyadic rationals,  then $F[a,b]$ is isomorphic to $F$. Furthermore, its commutant is exactly the subgroup $F(a,b)$.
 	\item If $a<b<c<d$ are dyadic rationals  then group generated by elements of $F[a,c]$ and $F[b,d]$ contains $F[a,d]$
  \end{enumerate}

\subsection{ Schreier graphs and orbital graphs}
\begin{definition}
	Assume that $G$ is a group generated by a finite set $S$ and let $H<G$ be its subgroup. The Schreier graph of $G$ modulo $H$ is an oriented labelled graph with the set of vertices  equal to the set of right cosets $\{Hg, g \in G\}$ and the set of edges equal to $\{(Hg, Hgs), s \in S\}$. We denote $\Gamma(G,H)$ the Schreier graph of $G$ modulo $H$.
\end{definition} 
If $G$ acts on a set $X$ and $p\in X$,   the Schreier graph of $G$ modulo  stabilizer of $p$ is called \textbf{the orbital graph} of $p$. Its vertex set can be identified with the orbit $\mathcal{O}_G(p)$ of $p$, and two vertices $v,w \in\mathcal{O}_G(p)$ are  connected by edge with label $s$ if and only if $s(v)=w$.  \\

For a connected graph with bounded degree one can  define its uniform growth function as follows:
\begin{definition}
Let $\Gamma=(V,E)$ be a connected graph with bounded degree. For  $v \in \Gamma$   denote $B_\Gamma (v,n)=$ a ball of radius $n$ centered at $v$ in $\Gamma$.  Then uniform growth function  of $\Gamma$ is defined as 
$$\bar{b}(n) = \sup_{v \in V} |B_\Gamma(v,n)|,$$
where $|B_\Gamma(v,n)|$ stands for cardinality of the set of vertices. 
\end{definition}
If $\Gamma$ is a Schreier graph of G, changing a finite generating set of $G$ preserves equivalence class of uniform growth function of $\Gamma$ under the following equivalence relation:\\ 
For two functions $f, g:\mathbb{N} \rightarrow \mathbb{N},$  $f$ is said to grow asympotically  not slower than $g$ ($g \preceq f$) if there exists a constant $C>0$ such that $g(n) < f(Cn)$ for all $n \in \mathbb{N}$ and $f \sim g$ if and only if  $g \preceq f$ and  $f \preceq g$. \\
In general, this equivalnce class is preserved under bi-Lipschtitz embeddings and we  refer to this class when we talk about uniform growth rate.
One should note that although uniform growth is always greater or equal to usual growth function of graph, they might be completely different and it is easy to construct an example of graph with exponential uniform growth which has linear growth function.\\
We also consider Schreier  graph $\Gamma(G,H)$  as a rooted graph with root at $H$. Space of rooted labelled graphs could be naturally equipped with a distance $d$ defined as follows. Let $\Gamma_1$ and $\Gamma_2$ be two rooted labelled graphs with roots $v_1$ and $v_2$. Then $d(\Gamma_1,\Gamma_2)=1/{n+1}$, where $n \geq 0$ is the least integer such that $B_{\Gamma_1}(v_1,n)$ and $B_{\Gamma_2}(v_2,n)$ are not isomorphic as rooted labelled graphs.
\subsection{Chabauty space} 
Let $G$ be a countable discrete group.  The set of its subgroups $Sub(G)$ is  endowed with topology ( called \textit{Chabauty topology}) induced from the space $2^G$ of all subsets of $G$ with usual product topology. Base of this topology  is formed by sets $$U_{A,B}=\{ H \in Sub(G): A \subset H, H \cap B = \emptyset \},$$ where $A,B$ are finite subsets of $G$. With Chabauty topology $Sub(G)$ becomes a compact metrizable space on which $G$ acts by conjugation and this is an action by homeomorphisms. \\
 Assume that $G$ is finitely generated and fix its generating set $S$. Convergence in Chabauty topology has a reformulation in terms of Schreier graphs. Namely, a sequence of subgroups $H_{n}$ converges to $H<G$ if and only if, for any generating set $S$, the sequence $\Gamma(G,H_n)$ converges to $\Gamma(G,H)$ in the space of labelled rooted graphs. \\
Suppose that  $H$ and $K$ are subgroups of $G$, then $H$ is said to be \textbf{confined by $K$} if the  closure of $K$-orbit  of $H$ does  not contain  the trivial subgroup.  Since the base of neighbourhoods of trivial subgroup is formed by sets of form $U_{\{1\},P}$ with $P$ finite, the last condition is equivalent to the existence of a finite set $P=\{g_1,g_2,...,g_r \} \subseteq G\setminus{1}$, such that $\forall k \in K:$ $$kHk^{-1} \cap P \neq \emptyset, $$ we  call such sets $P$ confining. 
In case   when $K=G$ group  $H$ is simply called \textbf{confined}. \\In terms of Schreier graphs, $H$ is not confined  if and only if one can find a sequence of vertices $v_n, n\geq 1$, in the Schreier graph $\Gamma(G,H)$, such that versions of $\Gamma(G,H)$ rooted at $v_n$  converge to the Cayley graph of $G$ in the space of rooted labelled graphs.\\
Confined subgroups are also related to study of \textit{uniformly recurrent subgroups} - closed minimal $G$-invariant subsets of $Sub(G)$ (it is easy to see that any nontrivial uniformly recurrent subgroup  consists of confined subgroups). We refer the reader to \cite{Ardien&Nico} for description of uniformly recurrent subgroups of Thompson's groups and its applications to $C^*$-simplicity.
\subsection{Wobbling groups}
The wobbling groups were first studied in \cite{Laczko} and applied to Tarski's circle-squaring problem.
\begin{definition}
	Let $\Gamma =(V,E)$ be a locally finite  connected graph equipped with standard graph metric $d_\Gamma$.  The wobbling group $W(\Gamma)$ is defined as a group of all bijections $g:V\rightarrow V$ such that $$\sup_{x \in V} d_\Gamma(x,g(x)) < \infty$$
\end{definition}
One can show that the wobbling group of any graph containing infinite path  contains a free subgroup, but no property (T) group could be embedded into a wobbling group of a graph with uniformly subexponential growth, see \cite{Kate} or \cite{Kate+Mikael}. However, as it was pointed out by Matte Bon, if one removes uniformity requirement, any residually finite group  can be embedded into a wobling group of graph of linear growth. In particular, group $SL_3 (\mathbb{Z})$ which has property (T) embeds into a wobbling group of graph with linear growth. Therefore, general embeddability questions would require some uniformity, although in case of Thompson's group F, which has few normal subgroups, question without assumptions about uniformity still makes sense.\\
Finally, observe that if $G$ is a finitely generated group with generating set $S$, and $H$ is its subgroup, then action of $G$ on right cosets of $H$ defines a homomorphism from $G$ to $W(\Gamma(G,H))$.
\section{Characterisation of confined subgroups of Thompson's group F} 
In this section we obtain a characterisation of subgroups of $F$ confined by its commutator subgroup $F'$ in terms of the action of $F$ on unit interval. This characterisation is analogous to one obtained in Theorem 4.1 of \cite{Nico}, although the proof of Theorem 4.1   does not directly apply to $F$, because in the present case corresponding  action of a confined subgroup on Cantor space may have infinite fixed closed proper subsets, and as result  statement of step $1$ in\cite{Nico} is false in this setting.\\
 If a group $G$ acts by homeomorphisms on a tolopological space $X$ and $Y \subset X$, we call a \textit{rigid stabilizer} of $Y$ a subgroup of $G$ consisting of all elements which  fix the complement of $Y$ pointwise. We denote this subgroup $R_G (Y)$, and stabilizer of $Y$ is denoted $St_G(Y)$.  Finally, a subgroup  of all elements $g \in G$ which act trivially on some  neighborhood of $Y$ is denoted $St^{0}_G(Y)$ and is called the germ stabilizier of $S$.
\begin{theorem}\label{thrm1}
	A subgroup $H$ of Thompson's group $F$ is confined by the commutator subgroup $F'$ of $F$ if and only if there exists a finite subset of unit segment $S \subset [0,1]$ such that $ St^{0}_{F'}(S)\leq H\leq St_{F}(S).$ In particular, a subgroup is confined if and only if it is confined by $F'$.
\end{theorem}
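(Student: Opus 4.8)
The plan is to prove the two inclusions separately and then deduce the final sentence. For the easy direction, suppose $S \subset [0,1]$ is finite and $St^0_{F'}(S) \leq H \leq St_F(S)$. To show $H$ is confined by $F'$, I would exhibit a finite confining set $P$. The natural candidate is a set of "bumps": for each point $s \in S$ that is not an endpoint of $[0,1]$, pick a small element $g_s \in F'$ whose support is a dyadic interval containing $s$ in its interior and small enough that the closures of these supports are pairwise disjoint and avoid the other points of $S$. Then $P = \{g_s : s \in S\}$ (possibly together with a few auxiliary elements to handle the case $S = \{0,1\}$ or $S \cap \{0,1\} \neq \emptyset$, using elements supported near $0$ and near $1$). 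The point is: for any $k \in F'$, the conjugate $kHk^{-1}$ contains $St^0_{F'}(kS) \cdot (\text{conjugate structure})$, and because $F'$ acts so that the moved set $kS$ is again a finite set of at most $|S|$ points, some $g \in F'$ fixing a neighborhood of $kS$ but acting nontrivially near one of the points of $kS$ will land in $kHk^{-1} \cap P$ after suitably choosing $P$ to be invariant enough — more precisely I would argue that $kHk^{-1} \supseteq St^0_{F'}(kS)$ and that $St^0_{F'}(kS)$ must contain one of finitely many prescribed bump elements because the $kS$ are uniformly finite. This uses transitivity of $F$ on tuples of dyadic rationals and the simplicity/structure of $F'$ recalled in the preliminaries.

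For the hard direction, assume $H$ is confined by $F'$ with confining set $P = \{g_1,\dots,g_r\} \subseteq F' \setminus \{1\}$; I want to produce the finite set $S$. This is where the bulk of the work and the main obstacle lie. Following Matte Bon's strategy from \cite{Nico}, I would first extract from the confining condition a uniform statement: there is a finite collection of dyadic intervals $I_1,\dots,I_N$ (the supports, or rather subintervals of the supports, of the $g_i$, together with their scalings under a bounded set of group elements) such that for every $k \in F'$, the conjugate $kHk^{-1}$ contains a nontrivial element supported in one of the $kI_j$. The candidate for $S$ is the set of points of $[0,1]$ that are "not locally covered" by $H$, i.e. $S = [0,1] \setminus U$ where $U$ is the union of all open dyadic intervals $I$ such that $H \supseteq R_{F'}(I)$ (equivalently $H \supseteq F'(I)$, using property (1) that $F(a,b)$ is the commutator subgroup of $F[a,b]$). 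One shows $U$ is open and dense — density is exactly where confinement by $F'$ is used: if some interval avoided $U$, one could move $H$ by $F'$ to trivialize it on a larger and larger scale, contradicting that $P$ is confining, since the elements of $P$ have support of fixed size. The key technical lemma will be a local-to-global step: if $H$ contains $F'(I)$ for a collection of dyadic intervals covering a dyadic interval $J$ (with overlaps), then $H \supseteq F'(J)$ — this is essentially property (2) of the subgroups $F[a,b]$, bootstrapped inside $H$. Combining this with compactness of $[0,1]$ forces $S = [0,1] \setminus U$ to be finite. Then $St^0_{F'}(S) \leq H$ is immediate from the definition of $U$, and $H \leq St_F(S)$ follows because any $h \in H$ must normalize the "covered set" structure: if $h$ moved some point of $S$ off $S$, it would conjugate some $F'(I) \leq H$ to an $F'(I')$ with $I'$ meeting $S$, contradicting maximality/definition of $U$ — here I also need that $h$ normalizes $H$ and hence permutes the intervals realizing $U$.

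The main obstacle is precisely the discrepancy flagged in the paragraph before the theorem: the action of a confined subgroup on the Cantor boundary may have infinite closed invariant proper subsets, so one cannot simply invoke minimality to conclude $S$ is finite, as in step 1 of \cite{Nico}. I would handle this by working directly on the interval $[0,1]$ with its dyadic structure rather than on germs: the finiteness of $S$ must be squeezed out of the \emph{uniform} bound on the size of the supports of the confining elements $g_i$, via a pigeonhole/scaling argument showing that between any two points of $S$ there is a definite amount of "room" at every dyadic scale, which is impossible for infinitely many points in $[0,1]$. Once both inclusions are established, the final sentence "a subgroup is confined if and only if it is confined by $F'$" is a formal consequence: confinement by $G = F$ trivially implies confinement by the subgroup... no — rather, confinement by $F'$ implies confinement by $F \supseteq F'$ is trivial (a confining set for $K$ is a confining set for any larger $K$), and conversely if $H$ is confined (by $F$) then in particular the characterisation's proof shows the associated $S$ is finite and $St^0_{F'}(S) \leq H$, which by the first direction gives confinement by $F'$; so I would phrase it as: confined $\Rightarrow$ the finite set $S$ exists (by the same argument, since a confining set $P \subseteq F \setminus \{1\}$ can be intersected/replaced using that $F'$ has finite index... ) $\Rightarrow$ confined by $F'$.
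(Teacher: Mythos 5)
Your overall architecture (define the open set $U$ of points locally "covered" by $H$, use the gluing property of the subgroups $F[a,b]$, and take $S$ to be the complement) matches the paper's, but the central step is missing: you deduce finiteness of $S=[0,1]\setminus U$ from "$U$ is open and dense plus compactness," and that implication is false — an open dense subset of $[0,1]$ can have uncountable (Cantor-set) complement, which is exactly the difficulty the paper flags when it says step 1 of \cite{Nico} fails here. Your proposed repair ("a definite amount of room between points of $S$ at every dyadic scale, impossible for infinitely many points") is not an argument: there is no uniform bound on support sizes that survives conjugation by $F'$, and in any case infinitely many points can coexist with arbitrary gaps. The paper instead proves finiteness in two genuinely separate steps, neither of which appears in your sketch: (i) $V$ has at most $r+1$ connected components, via a technical lemma that, after permuting and inverting the confining elements $g_1,\dots,g_r$, produces disjointly ordered intervals $U_i$ with $g_i(U_i)<U_i$, combined with transitivity of $F'$ on increasing $4r$-tuples of dyadic rationals to force one component of $V$ to extend past its left endpoint; and (ii) no component of the complement is a nondegenerate segment $[x,y]$, shown by conjugating all breakpoints of the $g_i$ into $(x,y)$ and arguing that the restriction of $H$ to $[x,y]$ is itself confined by $F(x,y)$, then rerunning the first step inside $(x,y)$. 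Also note that the very first step — $V\neq\emptyset$, i.e.\ $H$ contains some full rigid stabilizer — is obtained in the paper by citing a nontrivial theorem of Matte Bon on confined subgroups of minimal proximal actions; your "trivialize $H$ on larger and larger scales" sketch does not substitute for it.

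Two smaller but real problems. In the easy direction, bumps placed at the points of $S$ cannot work, since conjugation moves $S$ anywhere in $(0,1)$; the correct argument (which you gesture at but miscount) is to take $r=|S|+1$ nontrivial elements with pairwise disjoint supports, so that for every conjugator at least one conjugated support misses the $|S|$ points of $S$ by pigeonhole. Finally, your discussion of the "in particular" clause has the implications reversed: confined by $F$ implies confined by $F'$ is the trivial direction (the $F'$-orbit closure sits inside the $F$-orbit closure), the content is the converse, and it follows because the pigeonhole above works for conjugation by all of $F$, not just $F'$; your appeal to $F'$ having finite index is false, since $F/F'\cong\mathbb{Z}^2$.
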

\begin{proof}
	We first prove that any subgroup satisfying inclusions from the theorem is confined. It suffices to show that for any finite $S$ the subgroup $St^0_{F'}(S)$ is confined by $F'$. Put $r=|S| +1$ and take any nontrivial $g_1,g_2,...,g_r \in F$ with pairwise disjoint supports. Then for any $h \in F'$ support of at least one of the elements $h^{-1}g_1h,h^{-1}g_2h,...,h^{-1}g_rh$ does not intersect $S$, and thus this element belongs to the germ stabilizer of $S$.
	 
	To prove the reverse direction, consider a maximal open subset $V$ of interval $(0,1)$ such that  $H$ contains every $g \in F$, whose support belongs to $V$. It is easy to see that such maximal set exists, since if $H$  contains every element supported in one of a family of open sets, then it contains every element supported in their union.  Our aim is to show that the complement of $V$ is finite. We first  check that $V$ is non-empty. We are going to use the following theorem (see also a similar theorem 3.10 in \cite{Ardien&Nico})
	\begin{theorem}[Nicolas Matte Bon, \cite{Nico}] Let $G$ be a countable group acting by homeomorphisms of a Hausdorff space $X$, and assume that $A \leq G$ is a subgroup whose action on X is minimal and proximal. Let $H \in Sub(G)$ be confined by $A$. Then there exists a non-empty open subset $U \subset X$ and a finite index subgroup $\Gamma$ of rigid stabilizer  $R_A(U)$ such that $H$ contains the derived subgroup $[\Gamma,\Gamma]$.
    \end{theorem}
 Apply this theorem to $X=(0,1)$, $A=F'$ and $H$ (action of $F'$ is transitive on ordered n-tuples of dyadic rationals for any $n$, so it is minimal and proximal on unit interval). Let $a<b$ be dyadic rationals such that $[a,b]$  is contained in $U$. As we mentioned before, subgroup $F[a,b]$ of elements of $F$ supported on $[a,b]$ is isomorphic to Thompson's group F and its commutator is simple and coincides with the group $F(a,b)$ of all elements of F supported on interval $(a,b)$. Then since $F'[a,b]$ is simple, it must be contained in $\Gamma$ and consequently in its derived subgroup. Therefore $H$ contains  $F(a,b)$ and $V$ is non-empty.\\
Next, we show that  $V$ has only finitely many connected components. Suppose that $P=\{g_1,g_2,...,g_r \}$ is confining for $H$. Note that if we replace some elements of $P$ with their inverses and reorder elements of $P$, we will still have a confining set. We  need the following fact:
\begin{lemma}\label{clm:tec}
	For any nontrivial $ g_1,g_2,...,g_r \in F,$ possibly after permuting and taking inverses, one can find  intervals $U_1,...,U_r$ with dyadic endpoins such that  $g_1(U_1) < U_1<g_2(U_2)<U(2) <...<g_r(U_r)<U_r$, where interval $(a,b)$ is less than $(c,d)$ if $b<c$.
\end{lemma}
\begin{proof}
	We induct on $r$. Case $r=1$ is obvious. For the inductive step, choose an element with maximal supremum of support. Take this element as $g_r$, and let $s$ be the supremum of its support.  Note that $s$ must be a fixed point of  ${g_r}$. If $g_r$ is greater than indentity on   $(s-\epsilon, s]$ for any sufficiently small $\epsilon$, take it's inverse. It remains to apply inductive hypothesis to elements $ g_1,g_2,...,g_{r-1} $  and then  choose $U_r$ sufficiently close to $s$ to ensure that desired inequalities hold for $U_r$.
\end{proof}
Suppose that one can find $r$ connected components of $V$:          
 $(x_1, y_1), (x_2, y_2),...,(x_r, y_r)$,  such that $0<x_1$ and $ y_i\leq x_{i+1}$ for $i=1,...,r-1$. Using  lemma \ref{clm:tec}, we can show that at least one of these intervals could be extened over its left endpoint contradicting definition of connected component. Indeed,  let $U_i$, $i=1,...,r$, be as in lemma \ref{clm:tec} and let $g_1,g_2,..,g_r$ be corresponding modification of $P$ , then one can choose dyadic intervals $V_i \subset (x_i,y_i)$ and sufficiently small dyadic intervals $ W_i, x_i \in W_i,$ $i=1,...,r$, such that $W_1 <V_1 <W_2<V_2<...<W_r<V_r$. Since $F'$ acts transitively on increasing $4r$-tuples of dyadic rationals, there exists $h \in F'$ such that $h(V_i)=U_i, h(W_i)=g_i(U_i), i=1,...,r$. Then for some $i$ an element $ k= h^{-1}g_ih$ belongs to  $H$  and $k(V_i)=W_i$. Consequently, $F'[W_i]=k^{-1}F'[V_i]k < H$. Therefore any element of $F$ with support in $W_i$ belongs to $H$. Now, since elements with support in  $W_i$ together with elements with support in $(x_i,y_i)$  generate group of all elements supported on $ W_i\cup (x_i,y_i)$, we obtain an interval in $V$ which contains  $(x_i,y_i)$ as a strictly smaller subinterval, a contradiction. As a result,  $V$ can not have more than $r+1$ connected components.\\
Since $V$ consists of finitely many intervals, endpoins of these intervals must be fixed by $H$, and $[0,1] \setminus V$  is a disjoint union of  finitely many  segments and points. Assume that segment $[x,y], x<y,$ is a connected component of the complement of $V$, then it's endpoints must be fixed by $H$. Notice that lemma \ref{clm:tec}  together with transitivity on r-tuples  imply that $H$ can not have more than $r$ fixed points in the interval $(0,1)$, so $H$ can not act on $[x,y]$ trivially. Consider element $h \in F'$ which maps all breaking points (except for $0$ and $1$) of  each of $ g_1,g_2,...,g_r$   inside $(x,y)$. Then $h^{-1}Ph=\{ h^{-1}g_1h, h^{-1}g_2h,...,h^{-1}g_rh \}$ is still a confining set for $H$. Furthermore, each of its elements either has support in $(x,y)$ or moves at least one endpoint of $[x,y]$. Since $h^{-1}Ph$  is confining, its conjugation by any element of $F'$ must hit $H$. Thus, if we conjugate $h^{-1}Ph$ by elements with support in $(x,y)$ , we will still be hitting $H$, but elements of $h^{-1}Ph$ which move endpoint of this segment will still move the endpoint, so they can  not belong to $H$. Therefore, we can only consider those elements of $h^{-1}Ph$ which are supported on $[x,y]$. But this implies that restriction of $H$ to $[x,y]$ is confined by subgroup of all elements of F with support on $(x,y)$. Then we can repeat argument in the beginning of the proof to obtain  a subinterval $(z,t) \subset (x,y)$ that must belong to $V$, which contradicts definition of $[x,y]$. Consequently, complement of $V$ is some finite set of points $S$ which are fixed by $H$. Thus, by definition of $V$, 
$St^{0}_{F'}(S)\leq H\leq St_{F}(S)$, which completes the proof of the theorem.
\end{proof}
\subsection*{Remark 1.} Theorem \ref{thrm1} implies that graph $\Gamma(F,H)$, with $H$ being a confined subgroup of $F$, must be amenable. Indeed, it sufficies to prove this for $H=St^0_{F'}(S)=St^0_{F}(S \cup \{ 0,1\})$ with $S \cap (0,1) \neq \emptyset$. Notice that  $x_0(t)< t, \forall t \in (0,1)$, so for any finite $S \subset (0,1)$ there exists $k \in \mathbb{N}$ such that $x_0^k(S) \subset (0,1/2)$. Then, since $x_1(t)=t$   and $x_0(t)=t/2 $ on $(0,1/2)$ and they coincide in a neighbourhood of 1, $\Gamma(F,H)$ contains  arbitrarily large parts of square grid and thus it is amenable. 
\subsection*{Remark 2.} The theorem above also implies that the subgroups  constructed in \cite{Gili} are  examples of maximal non-confined subgroups of  $F$. 

\section{Growth of orbital graphs of Thompson's group F}
Results from previous section allow us to deduce lower bounds on uniform growth of orbital graphs of $F$  in the same fashion as in section $5$ of  \cite{Nico}.
\begin{theorem}
	Assume that $F$ acts on a set $X$ and let $p$ be any point in $X$. Then either orbital graph of $p$ has exponential uniform growth or it is fixed by commutator subgroup of $F$. 
\end{theorem}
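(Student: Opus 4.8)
The plan is to prove the equivalent statement: if $p$ is not fixed by $F'$, i.e. if $H:=\mathrm{St}_F(p)$ does not contain $F'$, then the orbital graph $\Gamma(F,H)$ has exponential uniform growth. I would first dispose of the case in which $H$ is not confined. By the Schreier-graph reformulation of confinement recalled in Section~2, non-confinement of $H$ provides vertices $v_n$ of $\Gamma(F,H)$ such that the graphs $\Gamma(F,H)$ rooted at $v_n$ converge to the Cayley graph of $F$; hence $\bar b_{\Gamma(F,H)}(k)\ge |B_{\mathrm{Cay}(F)}(1,k)|$ for every $k$, and the right-hand side grows exponentially since $F$ has exponential growth (it contains a free subsemigroup). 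So I may assume $H$ is confined, which by Theorem~\ref{thrm1} --- using that confined and confined by $F'$ coincide --- yields a finite set $S\subset[0,1]$ with $\mathrm{St}^0_{F'}(S)\le H\le\mathrm{St}_F(S)$; since $F$ is order-preserving and $S$ finite, the stabilizer $\mathrm{St}_F(S)$ fixes $S$ pointwise, hence so does $H$. If $S\cap(0,1)=\emptyset$ then every element of $F'$ is supported away from $\{0,1\}$, so lies in $\mathrm{St}^0_{F'}(S)\le H$, forcing $F'\le H$, contrary to assumption. Therefore $S\cap(0,1)\neq\emptyset$.

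Fix $s\in S\cap(0,1)$. As $H$ fixes $s$, we have $H\le\mathrm{St}_F(s)$, so the coset map $Hg\mapsto \mathrm{St}_F(s)g$ is a surjective covering of labelled graphs $\Gamma(F,H)\to\Gamma(F,\mathrm{St}_F(s))$. Covering maps lift paths, hence do not decrease the cardinality of balls, and so $\bar b_{\Gamma(F,H)}(n)\ge \bar b_{\Gamma(F,\mathrm{St}_F(s))}(n)$ for all $n$. It thus suffices to show that the orbital graph $\Gamma(F,\mathrm{St}_F(s))$ of an arbitrary interior point $s\in(0,1)$ has exponential uniform growth.

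For this I would exhibit a ping-pong configuration inside $F$: set $K=(1/4,3/4)$ and pick $a,b\in F$ with $a(1/4)=5/16$, $a(3/4)=7/16$, $b(1/4)=9/16$, $b(3/4)=11/16$ --- such elements exist because $F$ acts transitively on ordered pairs of dyadic rationals --- so that $a(K)=(5/16,7/16)$ and $b(K)=(9/16,11/16)$ are disjoint subintervals of $K$. Since the orbit $F\cdot s$ is dense in $(0,1)$ (the action of $F$, even of $F'$, on $(0,1)$ is minimal), it meets $K$; fix $q\in K\cap (F\cdot s)$, which is a vertex of $\Gamma:=\Gamma(F,\mathrm{St}_F(s))$. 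The classical argument underlying freeness of ping-pong semigroups shows that the intervals $w(K)$, as $w$ runs over the positive words of length $n$ in $a$ and $b$, are pairwise disjoint, whence the $2^n$ points $w(q)$ are pairwise distinct. Each such $w$ is a product of at most $Cn$ generators of $F$, where $C$ bounds the word lengths of $a$ and $b$, so all of these points lie in $B_\Gamma(q,Cn)$; therefore $\bar b_\Gamma(Cn)\ge 2^n$, which is exponential uniform growth. This completes the proof.

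I expect the last step to be the main obstacle, because the conclusion genuinely fails at the natural base vertex: by Savchuk's description, the orbital graph of, say, $1/2$ has polynomial growth from its base point, so one must locate good roots arbitrarily far away. The ping-pong pair $(a,b)$ together with minimality of the action on $(0,1)$ is exactly what produces, for every $n$, a vertex around which a depth-$n$ binary tree sits inside a ball of radius $O(n)$; the same configuration, applied in $F$ itself, is also what certifies the exponential growth of $F$ invoked in the non-confined case.
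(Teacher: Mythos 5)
Your argument is correct, and its skeleton coincides with the paper's: you dispose of the non-confined case by letting rooted versions of $\Gamma(F,H)$ converge to the Cayley graph of $F$, and in the confined case you invoke Theorem \ref{thrm1} to produce a finite fixed set $S$, observe that $S\cap(0,1)=\emptyset$ forces $F'\leq H$, and otherwise dominate the uniform growth of $\Gamma(F,H)$ by that of $\Gamma(F,\mathrm{St}_F(s))$ for a fixed interior point $s$ (the paper states this domination without the explicit covering-map justification you give, which is a welcome precision). The genuine divergence is in the final step: the paper closes by citing Savchuk's classification, according to which all Schreier graphs of points of $(0,1)$ have exponential growth, whereas you prove this from scratch by a ping-pong configuration --- a pair $a,b\in F$ mapping $K=(1/4,3/4)$ onto disjoint subintervals of $K$, so that the $2^n$ intervals $w(K)$ over positive words $w$ of length $n$ are pairwise disjoint and each contains $w(q)$ for a point $q$ of the orbit inside $K$, all lying in a ball of radius $Cn$. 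This makes the proof self-contained, independent of Savchuk's structural description, and in fact yields the stronger conclusion that $\Gamma(F,\mathrm{St}_F(s))$ has exponential growth from every base vertex, not merely exponential uniform growth. One caveat: your closing remark that, by Savchuk's description, the orbital graph of $1/2$ has polynomial growth from its base point is mistaken --- Savchuk's graphs of interior points all have exponential growth (which is exactly what the paper uses), and indeed your own construction with $q=1/2\in K$ exhibits $2^n$ vertices within distance $Cn$ of $1/2$. This misstatement contradicts your argument but is confined to a motivational aside and does not affect the validity of the proof.
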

\begin{proof}
	Orbital graph of $p$ is isomorphic to the Schreier graph of $St_F(p)$. Assume that $St_F(p)$ is not confined by $F'$. Then there exists a sequence of points $p_n, n\geq 1,$ in orbit of $p$ such that orbital graphs of $p_n$ converge to Cayley graph of $F$ in space of rooted labelled graphs. Then, as $F$ has exponential growth, orbital graph of $p$  has exponential uniform growth. \\
	If $St_F(p)$ is confined, then it either contains commutator subgroup $F'$ or it fixes a point $x$ in unit interval $(0,1)$. In the latter case  orbital graph  of $p$ grows not slower than  Schreier graph of $x$. But according to classification of Schreier graphs of points from unit interval obtained by  Savchuk in \cite{Savchuk}, all these graphs have exponential growth, which completes the proof.
\end{proof}
\begin{corollary}
	$F$ and $F'$ do not embed into wobbling groups of graphs with uniformly subexponential growth. 
\end{corollary}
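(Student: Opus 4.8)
The plan is to argue by contradiction, deriving from an embedding a Schreier graph of $F$ that would have to be simultaneously nontrivial (in the sense of not being fixed by $F'$) and of uniformly subexponential growth, contradicting the preceding theorem. Suppose $F$ embeds into $W(\Gamma)$ for some connected locally finite graph $\Gamma$ of uniformly subexponential growth. The image of $F$ acts on the vertex set $V$ of $\Gamma$ by bijections, each of which moves vertices a bounded graph-distance. Since $F$ is generated by two elements $x_0, x_1$, there is a uniform bound $R$ such that $d_\Gamma(v, s^{\pm1}(v)) \leq R$ for all $v \in V$ and $s \in \{x_0, x_1\}$; hence for every $v$, the $F$-orbital graph of $v$ (a Schreier graph of $F$ with respect to $\{x_0,x_1\}$) admits a Lipschitz map into $\Gamma$ sending an $n$-ball into the $Rn$-ball around $v$ in $\Gamma$. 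Because this map is injective on vertices of a single orbit and $\Gamma$ has uniformly subexponential growth, the uniform growth function of each orbital graph of $F$ on $V$ is dominated by $\bar b_\Gamma(Rn)$, hence is uniformly subexponential.

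By the theorem of the previous section, each such orbital graph, having uniformly subexponential (in particular non-exponential) uniform growth, must correspond to a point fixed by the commutator subgroup $F'$. Thus $F'$ acts trivially on all of $V$, i.e. $F'$ lies in the kernel of the embedding $F \hookrightarrow W(\Gamma)$. Since the embedding is injective and $F'$ is nontrivial, this is a contradiction, so $F$ does not embed into $W(\Gamma)$.

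For $F'$ itself, I would argue the same way after one small additional observation: if $F'$ embedded into such a $W(\Gamma)$, then because $F'$ is finitely generated (indeed finitely presented, but we only need a finite generating set to get a uniform wobbling radius), the same Lipschitz-map argument shows every orbital graph of $F'$ on $V$ has uniformly subexponential uniform growth. Now $F'$ is simple, so any nontrivial action of $F'$ is faithful; but one can realize a faithful action of $F'$ with an orbital graph of exponential growth — for instance, by restricting the action of $F'[a,b] \cong F'$ on a dyadic subinterval where Savchuk's analysis (as cited in the proof of the previous theorem) produces Schreier graphs of exponential growth — contradicting uniform subexponentiality. Alternatively, and more cleanly: the previous theorem's proof shows that a non-confined $St_F(p)$ yields orbital graphs converging to the Cayley graph of $F$; restricting attention to $F'$, a point with trivial $F'$-stabilizer gives an orbital graph of $F'$ containing the Cayley graph of $F'$ (still exponential), which cannot map Lipschitz-injectively into a uniformly subexponential $\Gamma$.

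The main obstacle is making the reduction to the previous theorem precise, namely verifying that the Lipschitz map from an orbital graph of $F$ into $\Gamma$ is genuinely injective on each orbit (it is, since the $W(\Gamma)$-action on $V$ is by bijections and two distinct cosets $St_F(p)g$, $St_F(p)g'$ send the base vertex to distinct vertices) and that this injectivity together with the uniform wobbling bound transports the uniform-growth estimate in the correct direction. Once this bookkeeping is in place, the corollary follows immediately from the dichotomy theorem; no new ideas about $F$ are needed.
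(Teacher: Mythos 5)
Your argument for $F$ itself is correct and is essentially the paper's: the orbit map $St_F(p)g \mapsto p\cdot g$ is an injective $R$-Lipschitz map from the orbital graph into $\Gamma$, so the uniform growth of every orbital graph is dominated by $\bar b_\Gamma(Rn)$ and is therefore uniformly subexponential; the dichotomy theorem then forces every vertex to be fixed by $F'$, putting $F'$ in the kernel and contradicting injectivity.

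The treatment of $F'$ is where there is a genuine gap. The dichotomy theorem you are invoking is a statement about actions of $F$, not of $F'$, and neither of your two routes bridges that. In the first route, exhibiting \emph{some other} faithful action of $F'$ with an exponential orbital graph (via $F'[a,b]$ and Savchuk's graphs) is a non sequitur: the contradiction has to come from the given action of $F'$ on the vertex set $V$ of $\Gamma$, and the existence of a different faithful action with exponential orbital graphs says nothing about that one. In the second route, you assume the action of $F'$ on $V$ has a point with trivial (or at least non-confined) $F'$-stabilizer; faithfulness gives no such point, and without an analogue of Theorem 3.1 for subgroups of $F'$ you cannot run the dichotomy for $F'$-actions. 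The paper's fix is a one-liner that you missed: $F$ embeds into $F'$ (e.g.\ $F\cong F[a,b]\leq F'$ for dyadic $0<a<b<1$), so an embedding $F'\hookrightarrow W(\Gamma)$ would restrict to an embedding $F\hookrightarrow W(\Gamma)$, which the first half of the argument already excludes. With that substitution your proof is complete.
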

\begin{proof}
	It suffices to notice that the orbital graph of a point under the action of a finitely generated subgroup of wobbling group has uniform growth not exceeding uniform growth of the initial graph. Then, if F acts on a graph by elements of its wobbling group,   either every point of a graph is fixed by commutator subgroup or it has exponential uniform growth. Conclusion for  $F'$ follows from the fact that $F$ embeds in $F'$. 
\end{proof}

\section{Final remarks and questions }
 We still do not know whether $F$ could be embedded into a wobbling group of a recurrent graph with bounded degree  or into a wobbling group of a  graph with subexponential growth. In the  latter case, arguments used in section $4$ might fail only for a point $p$ with non-confined stabilizer if a sequence $p_n, n\geq 1,$ is sufficiently sparse in the orbital graph of $p$. Similarly, for recurrent graphs the case of confined subgroup follows from results of Savchuk and Kaimanovich or Mischenko.  For non-confined subgroup $H$ the fact  that arbitrarily large balls from the Cayley graph appear in the Schreier graph modulo $H$ does not imply that the Schreier graph is not recurrent, since one can modify a recurrent graph by inserting large components of Cayley graph of $F$ without affecting its recurrence.  
 The following question still remains open: 
 \begin{question}
Is there any non-confined maximal subgroup of $F$ which corresponds to recurrent Schreier graph? In particular, are the Schreier graphs  modulo subgroups defined in \cite{Gili}  always transient?  
\end{question}
 It also would be natural to try to estimate the density of fragments of the Cayley graph of $F$ in a Schreier graph modulo its non-confined subgroup, possibly with additional assumptions concerning amenability. Affirmative answer to the following question would obviously settle the general case of graphs with subexponetial growth.  \\
 \begin{question}
 	Is it true that for any non-confined subgroup $H<F$ one can find constant $K$ such that for infinitely many $n \in \mathbb{N}$ there is a  copy of a ball $B_F(n)$ in a ball of the Schreier graph with radius $Kn$ centered at the root?
 \end{question}

\bigskip
Department of Mathematics, Northwestern University; 2033 Sheridan Road, Evanston,
IL 60208, USA.\\
\textit{E-mail address:} maxc@math.northwestern.edu
\end{document}